\newtheorem{theo}{Theorem}[section]
\newtheorem{conj}[theo]{Conjecture}
\newtheorem{prop}[theo]{Proposition}
\theoremstyle{remark}
\newtheorem{defi}[theo]{Definition}
\newcommand{\Z}{\mathbb{Z}}
\newcommand{\cz}{\mathcal{Z}}
\newcounter{fig}
\begin{document}

\title{The Kropholler Conjecture}
\author{M.J.Dunwoody}

\begin {abstract}
The first version of this paper, gave another
 proof of the Kropholler Conjecture, which gives a relative version of Stallings' Ends Theorem, following an earlier incorrect proof. 
It has been pointed out by Sam Shepherd that the the second proof was still inadequate.   We explain the difficulty and possible ways to obtain a correct proof.
 
\end {abstract}

\maketitle

\section{Introduction} In \cite{D}  the I gave the first of two incorrect proofs of the long standing Kropholler  Conjecture.
   Unfortunately the proof was inadequate, as was pointed out by Alex Margolis in a careful analysis.
The mistake was on p168 in the discussion about fitting two trees together.  In the first version of this paper I attempted another proof, but again it was inadequate.

\section {The Kropholler Conjecture}
  We follow the notation and terminology of \cite {D}, which contains a historical account of the conjecture and the results already obtained.
  
  Let $G$  be a group  and let $H$ be a subgroup.   A subset $S \subset G$ is said to be $H$-finite if $S$ is contained in finitely many
  right cosets of $H$, i.e. for some finite subset $F$ of $G$, we have $S \subset HF$.
  A subgroup $K$ of $G$ is $H$-finite,  if and only if $H\cap K$ has finite index in $K$. 
  Two subsets $R, S \subset G$ are said to be $H$-almost equal if the symmetric difference $R+ S$ is $H$-finite.
  We write $R =^a S$.  A set $A$ is $H$-almost invariant if $A =^a Ag$ for every $g \in G$.   If $A$ is  almost invariant, then so is 
  $A^* = G - A$.  We say that $A$ is proper if neither $A$ not $A^*$ is $H$-finite.
  
  \begin {conj} [Kropholler, 1988] \label {KC}  Let $G$ be a group and let $H$ be a subgroup. If there is a proper $H$-almost invariant set $A$ such that $A= AH$, then $G$ has a non-trivial action on a tree in which  $H$ fixes a vertex $v$ and every edge incident with $v$ has an $H$-finite stabiliser.
  \end {conj}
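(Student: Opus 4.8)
The plan is to build the required $G$-tree directly from the poset of right translates of $A$, using the standard dictionary between $G$-invariant \emph{nested} families of half-spaces and $G$-trees, and then to extract the two special properties ($H$ fixing a vertex, $H$-finite incident edge stabilisers) from the hypothesis $A=AH$. First I would pass to the Boolean algebra $\mathcal{B}$ of subsets of $G$ modulo the ideal of $H$-finite sets. On $\mathcal{B}$ the group $G$ acts on the right, complementation $B\mapsto B^{*}=G\setminus B$ is an involution, and the class of $A$ is proper, i.e.\ distinct from $0$ and $1$; moreover, because $Ah=A$ for every $h\in H$, the class of $A$ is genuinely fixed by the right action of $H$. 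Let $E=\{Ag,\,A^{*}g:g\in G\}$, ordered by $B\le C$ iff $B\setminus C$ is $H$-finite, an order reversed by the involution. Two elements $B,C$ \emph{cross} when all four corners $B\cap C$, $B\cap C^{*}$, $B^{*}\cap C$, $B^{*}\cap C^{*}$ are $H$-infinite, and a family is nested when no two members cross.

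The heart of the matter is to replace $E$ by a nonempty, $*$-invariant, $G$-invariant nested subfamily $F$ consisting of proper sets. For this I would first establish a finiteness statement — that $A$ being proper and $H$-almost invariant suffices to control the crossings of any fixed translate (in the absolute Stallings case this is Dunwoody's ``finitely many translates cross'' lemma) — and then run an uncrossing/symmetrisation procedure $G$-equivariantly so that it terminates and outputs $F$. Granting such an $F$, the structure-tree construction yields a nontrivial $G$-tree $T$: its edges are the pairs $\{B,B^{*}\}$ with $B\in F$, its vertices are the consistent orientations of $(F,\le)$, the action is nontrivial because $A$ is proper, and the stabiliser of an edge is the stabiliser in $G$ of the corresponding half-space.

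Next, the hypothesis $A=AH$ furnishes a canonical $H$-invariant orientation of $F$ (orient each edge toward the side on which $A$ lies), hence a vertex $v$ fixed by $H$, so that $H\subseteq G_{v}$. It then remains to check that each edge incident with $v$ has $H$-finite stabiliser. These edges correspond to the half-spaces of $F$ that are minimal among those lying on the $v$-side, and I would argue that the stabiliser of such a minimal $B$ is $H$-finite: otherwise it would contain an $H$-infinite subgroup $K$ with $BK=^{a}B$, and $K$-invariance of a minimal half-space would force $B$ or $B^{*}$ to be $H$-finite, contradicting properness. Note that $A$ itself, whose stabiliser contains $H$, must then correspond to a non-incident half-space ``far'' from $v$; this is the relative analogue of edge groups being smaller than the vertex group in Stallings' theorem.

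The step I expect to be the main obstacle is the equivariant extraction of the nested family $F$ in the second paragraph. Here $G$ need not be finitely generated and $H$ need not be finite, so the crossing-finiteness and the termination of uncrossing are far more delicate than in the classical case; this is precisely the ``fitting two trees together'' point where the earlier argument broke down. I would attempt to resolve it not by combining two separately-built trees but by proving, in one stroke, a generalisation of the Almost Stability Theorem in which the operative $G$-set is the coset space $G/H$ and the conclusion delivers $H$-finite (rather than finite) edge stabilisers directly; the orientation fixed by $H=\mathrm{Stab}(v)$ and the $H$-finiteness of incident edge groups would then be built into the output of that theorem, bypassing any ad hoc gluing.
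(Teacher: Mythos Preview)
Your proposal correctly locates the difficulty but does not resolve it. You yourself name the crux: producing, $G$-equivariantly, a nested family $F$ from the translates of $A$, or equivalently proving an $H$-relative Almost Stability Theorem. Your proposed resolution---``prove, in one stroke, a generalisation of the Almost Stability Theorem''---is not a method but a restatement of the target: in the paper the $H$-AST (Theorem~2.4) is a \emph{consequence} of the construction that proves the conjecture, not an input to it. So at the decisive step your plan is circular. Relatedly, the crossing-finiteness lemma you invoke (``Dunwoody's finitely many translates cross'') is exactly what is unavailable here: $G$ need not be finitely generated and $H$ need not be finite, and no relative version of that lemma is supplied.

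The paper's route is genuinely different from the nested-set/structure-tree template you sketch. It does not try to uncross translates of $A$ at all. Instead it forms the complete $2$-complex $X$ on the vertex set $AG$, uses the metric $d(u,v)=\#\{H\text{-cosets in }u+v\}$ to lay down a $G$-invariant pattern $P\subset|X|$, and observes that since $H^{1}(X;\Z_{2})=0$ every track separates, so the dual graph $D_{P}$ is already a $G$-tree with no uncrossing needed. The real content is Theorem~2.2: the intersections $P\cap X^{(1)}$ can be labelled by right $H$-cosets so that two points share a label iff they lie on the same track; this is proved by a direct combinatorial ``no crossing of cosets'' argument inside single $2$-simplices and squares. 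The edge stabilisers are then read off from the coset labels (each contains a conjugate of $H$ with finite index), and the vertex fixed by $K\geq H$ is the component of $|X|-P$ containing $A$.

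A secondary gap: your argument that edges incident to $v$ have $H$-finite stabiliser (``$K$-invariance of a minimal half-space would force $B$ or $B^{*}$ to be $H$-finite'') is asserted, not proved, and there is no obvious reason it should hold. In the paper this conclusion is not obtained by any minimality or contradiction argument; it falls out immediately from the coset labelling of tracks.
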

  
  If a finitely generated group $G$ has such an action, then it has such an action in which there is one orbit of edges.   This will correspond to a decomposition of $G$ as a free product with amalgamation or as an $HNN$-group.  We say that $G$ splits over an $H$-finite subgroup.
  
  In \cite{D}  it is shown that if $G$ is finitely generated over $H$, i.e. $G$ is generated   by $H\cup S$, where $S$ is finite,  then there is a connected $G$-graph $X$ with one orbit of vertices and finitely many orbits of edges, and for which there is a vertex $o$ fixed by $H$.   If there is a set $A$ as above, then the graph $H\backslash X$ has more than one end.    This means
 that $B(H\backslash X)$ has a nested set of generators, at least one of which will be an infinite subset of $V(H\backslash X)$, with infinite complement.  It is shown that lifting this set to $VX$ will
 give a proper $H'$-almost invariant subset $A'$ of $G$, with right stabiliser $K$, satisfying $H'A'K =A'$, with $H' \leq H\leq K$.
 
 This means that in Conjecture 2.1, we can replace the condition $A=AH$ by $A=HAK$ where $H \leq K$, in the case when $G$ is finitely generated over $H$.
   
  Let $A$ be $H$-almost invariant and let $HA =A$, so that $A+Ag$ is a union of finitely many right  $H$-cosets for every $g \in G$.
  Let $K$ be the right stabiliser of $A$.  Suppose that $H$ is the left stabiliser of $A$ and that $H\leq K$, so that $A = HAH = HAK$.

  Let $X$ be the $2$-complex in which  the $1$-skeleton is the complete graph on $VX = AG$,  and there is a $2$-simplex for each triple of distinct vertices in $VX$.
  Thus $X$ is the complete $2$-complex on the set $AG = \{ Ag | g \in G\}$.
  
  Let $\sigma = \sigma(u,v,w)$ be the $2$-simplex with vertices $u, v, w$.  The set $u+v$ is a finite set of cosets.  Each coset in $u+v$ also belongs to exactly one of $u+w$ and $v+w$,
  so that $u+v = (u+w)+(v+w)$.

  As in \cite {D} we can define a metric on $VX$ in which $d(u,v)$ is the number of right $H$-cosets in $u+v$.   
  
Consider a $2$-simplex $\sigma $ of $X$ with vertices $u,v, w$.  We know that $d(u,v) \leq d(u,w) + d(w, v)$.  Also $(u+v)+(v+w)+(w+u)   = 0 (mod \ 2)$ 
 and so the sum $d(u,v) + d(v+w) +d(w, u)$ is even.   Hence  as in \cite {DD} p224 there is a {\it pattern} P in $|X|$ for which 
\begin  {itemize}
\item [(i)]  For each $2$-simplex $\sigma $  of $X$  $P\cap |\sigma | $ is a union of finitely many disjoint straight lines joining distinct faces of $\sigma$.
\item [(ii)] If $\gamma = [u,v]$ is a $1$-simplex of $X$, then $|\gamma |\cap P$ consists of $d(u,v)$ points.
\end {itemize}
  A {\it track} is a connected pattern.
  
 Let $EX$ be the set of $1$- simplexes of $X$.  Let $j_P : EX \rightarrow \cz$,  where  $j_P (\gamma )  $ is the number of points in $|\gamma |\cap P $.
 Two patterns $P, Q$ are {\it equivalent}  if  $j_P =j_Q$.    Two disjoint tracks are equivalent if and only if they bound a band,  a closed subset of $X$ that contains no vertex
 or central region.   
 
 Since  $H^1(X, \Z _2) = 0$ (see below) our complex $X$ admits a two colouring in which vertices $u, v$ have the same colour if $d(u,v)$ is even.  This colouring extends to $|X| -P$ (see  Fig 1)  
 in such a way that adjacent regions have different colours.
 
 As in \cite {DD} p225, let $D_P$ be the graph in which 
 
 $VD_P$ is the set of components of $|X|-P$,   
 
 $ED_P$ is the set of tracks of $P$

   
  \begin{figure}[htbp]
\centering  
\begin{tikzpicture}[scale=.8]

  \draw  (0,.0) -- (4, 0) -- (2, 4) -- (0,0)  ;

\draw [red]  (4,0) node {$\bullet $} ;
\draw [red]  (2,4) node {$\bullet $} ;

\draw [red] (1.5, 3) -- (2.25, 3.5) ;
\draw [red]  (1.25, 2.5) -- (2.75, 2.5) ;
\draw [red]  (.5, 0) -- (.4, .8) ;
\draw [red]  (.7, 0) --(.8, 1.6);
\draw [red]  (1.1, 0) -- (1, 2) ;
\draw [red] (3, 0) -- (3.5, 1) ;
\draw [red]  (2.5, 0) -- (3, 2) ;

 \draw  (6,.0) -- (10, 0) -- (8, 4) -- (6,0)  ;
\draw  [left] (6,0) node {$u$} ;
\draw  [left] (8,4) node {$v$} ;

\draw  [right ] (10,0) node {$w$} ;
\draw  [left] (0,0) node {$u$} ;
\draw  [left] (2,4) node {$v$} ;

\draw  [right ] (4,0) node {$w$} ;

\draw [red] (7.5, 3) -- (8.25, 3.5) ;
\draw [red]  (7.25, 2.5) -- (8.75, 2.5) ;
\draw [red]  (6.5, 0) -- (6.4, .8) ;
\draw [red]  (6.7, 0) --(6.8, 1.6);
\draw [red]  (7.1, 0) -- (7, 2) ;
\draw [red] (9, 0) -- (9.5, 1) ;
\draw [red]  (8.5, 0) -- (9, 2) ;

\draw [red]  (10,0) node {$\bullet $} ;
\draw [red]  (8,4) node {$\bullet $} ;

\fill[pink] (9.5,1) --(10,0) -- (9, 0)--cycle ;
\fill[pink] (8.25,3.5) --(8,4) -- (7.5,3)--cycle ;
\fill[pink] (6.5,0) --(6.4,.8) -- (6.8,1.6)--(6.7,0) --cycle ;

\fill[pink] (7.25, 2.5) -- (8.75, 2.5) -- (9,2) --(8.5,0) -- (8.5, 0)--(7.1,0)--(7,2) --cycle ;
\end{tikzpicture}
\caption {\label 1}

\end{figure}
If $e \in ED_P$, then the vertices of $e$ are the components whose closures contain $e$.   Because  $X$ can be two coloured as above, every edge of $D_P$ is incident with different vertices,  so that there are no twisted tracks.  In fact $D_P$ is a tree, since $H^1(X, \Z _2) = 0$, which means that tracks separate by \cite {DD} Proposition VI, 4.3.
 We get  $H^1(X, \Z _2) = 0$, since  every $n$-cycle in the $1$-skeleton of $X$ bounds a disc made up of $n-2$ triangles, each containing a a particular vertex $v$ of the cycle and one of the $n-2$ edges of the cycle not incident with $v$.

   The pattern $P$ is invariant under the action of $G$, and so $D_P$ is a $G$-tree.
  We have shown that for an  $H$-almost invariant set $A$ such that $A =HAK$ as above, there is a uniquely determined $G$-tree given by the pattern $P$.

  Let $Hx$ be a coset.   If $Hx$ is in an edge $uv$ of the $2$-simplex $\sigma = uvw$,  then it belongs to exactly one of the other two sides.  For each edge containing $Hx$ select an interior point. Joining points corresponding to $Hx$
  in every $2$-simplex will give a track $t$ intersecting each simplex at most once.    If every track of $P$ was such a track, then $D_P$ would be a tree as predicted in
  Conjecture \ref {KC}.

   A `proof' was given of this in the first version of this paper.  
   However Sam Shepherd has pointed out that a track of $P$ might intersect an edge of $X$ in more than one point.
   To see how this can happen consider four distinct vertices $u, v, w, z$ in $VX$. They determine a subcomplex. $Y$  of $X$ consisting of $4$ $2$-simplexes that are the faces of a $3$-simplex (tetrahedron) with those four vertices.  Topologically this will be a $2$-sphere, and a track will intersect the $2$-sphere in finitely many simple closed curves.
 
   A track of $P$ might intersect $Y$ in a pattern in which there is a track as in Fig 2.

     In the first version of this paper it was claimed that this tree  had  the properties required to prove Conjecture 2.1.  This was done by showing that tracks corresponded to cosets in a nice way.

   In fact this will be the case if  every track intersects any edge in at most one point,  but so far we have been unable to show that this is the case.

 The fact that each track separates, means that there is another metric $d_1$ on $VX$ in which $d_1(u,v)$ is the number of tracks of $P$ which separate $u,v$.
 It can be seen that for every pair $u,v$ we have $d_1(u,v) \leq d(u,v)$.    The metric $d_1$ is a tree metric, since $d_1(u,v) $ is the number of edges in the tree $D_P$ joining $u,v$.
 
The metric $d_1$ will give rise to tracks that intersect each edge as most once.   For example consider the tracks for $d$ and $d_1$ intersecting $Y$.
 A  track in $Y$ is a simple closed curve, and will partition $VY$ into two subsets and each partition will be the same as one corresponding to a track which intersects each edge at most once. Thus for example the partition $\{ u,v,w,z\} = \{ u, v\} \cup \{ w,z\} $ given by the track in Fig 2 can be achieved by the track as in Fig 3,
and  the partition $\{ u,v,w,z\} = \{ u, z\} \cup \{ v,w\} $ can be achieved by the track as in Fig 4.

There is a bijection between the set of tracks for the metric $d_1$ and the set of tracks for the metric $d$.   To see this note that a track of $P$ separates $u,v$ if and only if it intersects
the edge $uv$ an odd number of times.  Since there are no twisted tracks,  every track intersects at least one edge an odd number of times.



Suppose that the $G$-tree $T$ corresponding to the $d_1$-metric for $P$ is trivial.   Thus there will be a vertex $o$ of  $T$ which is fixed by $G$.  If $o$ is in the orbit of the vertices of $X$, then $T$ will consist of a single point.  If not then $T$ will  have an orbit $uG$ of vertices of $X$ and there will be a vertex  $o$ of $T$ fixed by $G$ so that the $d_1$ distance of $o$  from every vertex of $uG$ is constant.  It seems likely that this means that  the only possibility for the intersection of $Y$ with the pattern corresponding to $d_1$ is a multiple of the  four tracks as in Fig 5. 
The action on $T$ is trivial, if and only if the values taken by $d_1$ are bounded.    If the action is trivial, then $d_1$ can take only even values.

  \begin{figure}[htbp]
\centering
\begin{tikzpicture}[scale=.8]

  \draw  (0,0) -- (0,4) ;
  \draw  (4,0) -- (4,4)--(0,4) ;
   \draw  (0,0) -- (4,4) ;
  \draw  (4,0)--(0,0) ;
  
\draw   (0,0) node {$\bullet $} ;
\draw   (4,0) node {$\bullet $} ;	
\draw  (0,4) node {$\bullet $} ;
\draw  (4,4) node {$\bullet $} ;
\
 \draw  (6,0)--(6,4) --(10,0)--(10,4) ; 

\draw   (6,0) node {$\bullet $} ;
\draw   (10,0) node {$\bullet $} ;
\draw  (6,4) node {$\bullet $} ;
\draw  (10,4) node {$\bullet $} ;

\draw [red] (6, 2.4) --(10, 2) ;
\draw [red]  (6,2.2) --(10,1.8) ;
\draw [red] (6, 2) --(10,1.6) ;

\draw [red] (6,1.8)--(8,0) ;

\draw [red] (8,4)--(10,2.2) ;


\draw [red] (0,2.4) -- (2, 4) ; 
\draw  [red] (2, 0) -- (4, 1.6) ; 

\draw  [red] (0,1.8)--(4,1.8) ;

\draw [red] (0,2)--(4,2) ;
\draw  [red] (0,2.2)--(4,2.2) ;

\draw (6,0) --(10,0) ;

\draw (6,4) --(10,4) ;

\draw  [left] (0,0) node {$u$} ;
\draw  [left] (6,0) node {$u$} ;
\draw  [left] (0,4) node {$v$} ;
\draw  [left] (6,4) node {$v$} ;

\draw  [right ] (4,0) node {$w$} ;
\draw  [right ] (10,0) node {$w$} ;
\draw  [right ] (4,4) node {$z$} ;

\draw  [right ] (10,4) node {$z$} ;

\end{tikzpicture}

\caption {\label 2}

\end{figure}

  \begin{figure}[htbp]
\centering
\begin{tikzpicture}[scale=.8]

  \draw  (0,0) -- (0,4) ;
  \draw  (4,0) -- (4,4)--(0,4) ;
   \draw  (0,0) -- (4,4) ;
  \draw  (4,0)--(0,0) ;
  
\draw   (0,0) node {$\bullet $} ;
\draw   (4,0) node {$\bullet $} ;	
\draw  (0,4) node {$\bullet $} ;
\draw  (4,4) node {$\bullet $} ;

 \draw  (6,0)--(6,4) --(10,0)--(10,4) ; 

\draw   (6,0) node {$\bullet $} ;
\draw   (10,0) node {$\bullet $} ;
\draw  (6,4) node {$\bullet $} ;
\draw  (10,4) node {$\bullet $} ;

\draw [red] (8, 4) --(8,0) ;
\draw [red] (2, 4) --(2,0) ;


\draw (6,0) --(10,0) ;

\draw (6,4) --(10,4) ;

\draw  [left] (0,0) node {$u$} ;
\draw  [left] (6,0) node {$u$} ;
\draw  [left] (0,4) node {$v$} ;
\draw  [left] (6,4) node {$v$} ;

\draw  [right ] (4,0) node {$w$} ;
\draw  [right ] (10,0) node {$w$} ;
\draw  [right ] (4,4) node {$z$} ;

\draw  [right ] (10,4) node {$z$} ;

\end{tikzpicture}

\caption {\label 2}

\end{figure}

 \begin{figure}[htbp]
\centering
\begin{tikzpicture}[scale=.8]

  \draw  (0,0) -- (0,4) ;
  \draw  (4,0) -- (4,4)--(0,4) ;
   \draw  (0,0) -- (4,4) ;
  \draw  (4,0)--(0,0) ;
  
\draw   (0,0) node {$\bullet $} ;
\draw   (4,0) node {$\bullet $} ;	
\draw  (0,4) node {$\bullet $} ;
\draw  (4,4) node {$\bullet $} ;

 \draw  (6,0)--(6,4) --(10,0)--(10,4) ; 

\draw   (6,0) node {$\bullet $} ;
\draw   (10,0) node {$\bullet $} ;
\draw  (6,4) node {$\bullet $} ;
\draw  (10,4) node {$\bullet $} ;

\draw [red] (8, 4) --(10,2) ;
\draw [red] (2, 0) --(4,2) ;
\draw [red] (8, 0) --(6,2) ;
\draw [red] (2, 4) --(0,2) ;


\draw (6,0) --(10,0) ;

\draw (6,4) --(10,4) ;

\draw  [left] (0,0) node {$u$} ;
\draw  [left] (6,0) node {$u$} ;
\draw  [left] (0,4) node {$v$} ;
\draw  [left] (6,4) node {$v$} ;

\draw  [right ] (4,0) node {$w$} ;
\draw  [right ] (10,0) node {$w$} ;
\draw  [right ] (4,4) node {$z$} ;

\draw  [right ] (10,4) node {$z$} ;

\end{tikzpicture}

\caption {\label 2}

\end{figure}

 \begin{figure}[htbp]
\centering
\begin{tikzpicture}[scale=.8]

  \draw  (0,0) -- (0,4) ;
  \draw  (4,0) -- (4,4)--(0,4) ;
   \draw  (0,0) -- (4,4) ;
  \draw  (4,0)--(0,0) ;
  
\draw   (0,0) node {$\bullet $} ;
\draw   (4,0) node {$\bullet $} ;	
\draw  (0,4) node {$\bullet $} ;
\draw  (4,4) node {$\bullet $} ;

 \draw  (6,0)--(6,4) --(10,0)--(10,4) ; 

\draw   (6,0) node {$\bullet $} ;
\draw   (10,0) node {$\bullet $} ;
\draw  (6,4) node {$\bullet $} ;
\draw  (10,4) node {$\bullet $} ;

\draw [red] (8.5, 4) --(10,2.5) ;
\draw [red] (2.5, 0) --(4,1.5) ;
\draw [red] (7.5, 0) --(6,1.5) ;
\draw [red] (1.5, 4) --(0,2.5) ;

\draw [red] (8.5, 0) --(10,1.5) ;
\draw [red] (2.5, 4) --(4,2.5) ;
\draw [red] (7.5, 4) --(6,2.5) ;
\draw [red] (1.5, 0) --(0,1.5) ;


\draw (6,0) --(10,0) ;

\draw (6,4) --(10,4) ;

\draw  [left] (0,0) node {$u$} ;
\draw  [left] (6,0) node {$u$} ;
\draw  [left] (0,4) node {$v$} ;
\draw  [left] (6,4) node {$v$} ;

\draw  [right ] (4,0) node {$w$} ;
\draw  [right ] (10,0) node {$w$} ;
\draw  [right ] (4,4) node {$z$} ;

\draw  [right ] (10,4) node {$z$} ;

\end{tikzpicture}

\caption {\label 2}

\end{figure}

 The tracks for the metric $d_1$  intersect  each $2$-simplex in just one  line joining two edges.

  \begin{figure}[htbp]
\centering
\begin{tikzpicture}[scale=.8]

  \draw  (0,0) -- (0,4) --(4,4)-- (0,0)  ;
  \draw  (0,0) --(4,0) -- (4,4) ;

\draw [red ] (4, 1.5) --(0,1.5) ;
\draw [red ] (4, 1.7) --(0,1.7) ;
\draw [red ] (4, 1.9) --(0,1.9) ;
\draw [red ] (4, 2.1) --(0,2.1) ;
\draw [red ] (4, 2.3) --(0,2.3) ;
\draw [red ] (4, 2.5) --(0,2.5) ;

\draw   (0,0) node {$\bullet $} ;
\draw   (4,0) node {$\bullet $} ;
\draw  (0,4) node {$\bullet $} ;
\draw  (4,4) node {$\bullet $} ;
\
 \draw  (6,0)--(6,4) --(10,0)--(10,4) ; 

\draw   (6,0) node {$\bullet $} ;
\draw   (10,0) node {$\bullet $} ;
\draw  (6,4) node {$\bullet $} ;
\draw  (10,4) node {$\bullet $} ;

\draw [red ] (6, 1.5) --(10,1.5) ;
\draw [red ] (6, 1.7) --(10,1.7) ;
\draw [red ] (6, 1.9) --(10,1.9) ;
\draw [red ] (6, 2.1) --(10,2.1) ;
\draw [red ] (6, 2.3) --(10,2.3) ;
\draw [red ] (6, 2.5) --(10,2.5) ;

\draw [red] (6,2.7)--(7.3,4) ;
\draw [red] (6,3.1)--(6.9,4) ;
\draw [red] (6,3.5)--(6.5,4) ;

\draw [red] (0,2.7)--(1.3,4) ;
\draw [red] (0,3.1)--(0.9,4) ;
\draw [red] (0,3.5)--(0.5,4) ;

\draw [red] (4, 3.8) --(3.8, 4) ;
\draw [red] (4, 3.4) --(3.4, 4) ;
\draw [red] (4, 3) --(3, 4) ;
\draw [red] (4, 2.6) --(2.6,4) ;

\draw [red ] (10, 3.8) --(9.8,4) ;
\draw [red ] (10, 3.4) --(9.4,4) ;
\draw [red ] (10, 3) --(9,4) ;
\draw [red ] (10, 2.6) --(8.6,4) ;

\draw [red] (10,1.3)--(8.7,0) ;
\draw [red] (10,0.9)--(9.1,0) ;
\draw [red] (10,0.5)--(9.5,0) ;

\draw [red] (6,.2)--(6.2,0) ;
\draw [red] (6,1)--(7,0) ;

\draw [red] (6,.6)--(6.6,0) ;

\draw [red] (6,1.4)--(7.4,0) ;
\draw [red] (0,1.4)--(1.4,0) ;
\draw [red] (0,.2)--(0.2,0) ;
\draw [red] (0,1)--(1,0) ;

\draw [red] (0,.6)--(.6,0) ;

\draw [red] (4, 1.3) --(2.7,0) ;
\draw [red] (4, .9) --(3.1,0) ;
\draw [red] (4, .5) --(3.5,0) ;

\

\draw [red] (6,.2)--(6.2,0) ;
\draw [red] (6,1)--(7,0) ;

\draw [red] (6,.6)--(6.6,0) ;

\draw [red] (6,1.4)--(7.4,0) ;
\draw [red] (6,.2)--(6.2,0) ;
\draw [red] (6,1)--(7,0) ;

\draw [red] (6,.6)--(6.6,0) ;

\draw [red] (6,1.4)--(7.4,0) ;

\draw (6,0) --(10,0) ;

\draw (6,4) --(10,4) ;

\draw  [left] (0,0) node {$u$} ;
\draw  [left] (6,0) node {$u$} ;
\draw  [left] (0,4) node {$v$} ;
\draw  [left] (6,4) node {$v$} ;

\draw  [right ] (4,0) node {$w$} ;
\draw  [right ] (10,0) node {$w$} ;
\draw  [right ] (4,4) node {$z$} ;

\draw  [right ] (10,4) node {$z$} ;

\end{tikzpicture}
\caption {\label 2}
\end{figure}

If every intersection of  the pattern $P$ with $Y$ is as in Figures 3, 4, 5 or 6, then one can prove that the metrics $d$ and $d_1$ are the same.  This is because if there are no intersections of tracks with $Y$ as in Fig 2, then it is the case that  every track intersects each edge at most once and  there is a bijection between
tracks and cosets. 



If the tree $D_P$ is non-trivial,  it will be a $G$-tree.  If, in addition,  the metrics $d$ and $d_1$ are the same, then this tree will be as predicted in Conjecture \ref {KC}.  The vertex of $D_P$ corresponding to the vertex $A$ of $X$ will have stabiliser $K$
 which contains $H$.
 
 For $H$ finite, the Almost Stability Theorem \cite {DD} tells us that the almost equality class of $A$ is the vertex set of a $G$-tree with finite edge groups.  If $A$ is a proper almost invariant set,  then this set does not include either the empty set or all of $G$, and so this tree is non-trivial.  This will also be the case for our tree $D_P$.

 \end{document}